\definecolor{webgreen}{rgb}{0,.5,0}
\definecolor{webbrown}{rgb}{.6,0,0}
\newcommand{\seqnum}[1]{\href{https://oeis.org/#1}{\rm \underline{#1}}}
\def\suchthat{\, : \, }
\def\modd#1 #2{#1\ \mbox{\rm (mod}\ #2\mbox{\rm )}}
\def\uni{ \, \cup \, }
\newenvironment{smallarray}[1]
{\null\,\vcenter\bgroup\scriptsize
\arraycolsep=.13885em
\hbox\bgroup$\array{@{}#1@{}}}
{\endarray$\egroup\egroup\,\null}
\begin{document}

\theoremstyle{plain}
\newtheorem{theorem}{Theorem}
\newtheorem{corollary}[theorem]{Corollary}
\newtheorem{lemma}[theorem]{Lemma}
\newtheorem{proposition}[theorem]{Proposition}

\theoremstyle{definition}
\newtheorem{definition}[theorem]{Definition}
\newtheorem{example}[theorem]{Example}
\newtheorem{conjecture}[theorem]{Conjecture}
\newtheorem{openproblem}[theorem]{Open Problem}

\theoremstyle{remark}
\newtheorem{remark}[theorem]{Remark}

\title{Proof of a conjecture of Krawchuk and Rampersad}

\author{
Jeffrey Shallit\footnote{Research funded by a grant from NSERC, 2018-04118.} \\
School of Computer Science\\
University of Waterloo\\
Waterloo, ON  N2L 3G1\\
Canada\\
\href{mailto:shallit@uwaterloo.ca}{\tt shallit@uwaterloo.ca}\\
}

\maketitle

\begin{abstract}
We prove a 2018 conjecture of Krawchuk and Rampersad on the extremal
behavior of $c(n)$, where $c(n)$ counts the number of length-$n$ factors 
of the Thue-Morse word $\bf t$, up to cyclic rotation.
\end{abstract}

\section{Introduction}

Let $\bf x$ be an infinite word.
In a recent paper, Krawchuk and Rampersad \cite{Krawchuk&Rampersad:2018}
studied the cyclic complexity function $c_{\bf x}(n)$, defined to be the number
of length-$n$ factors of $\bf x$, where factors that are the same, up to
cyclic shift, are only counted once.

They observed that for ${\bf t} = 01101001 \cdots$, the
Thue-Morse sequence \cite{Allouche&Shallit:1999},
the function $c_{\bf t}(n)$ is $2$-regular and is specified
by a linear representation of rank $50$.  This means there are
vectors $v, w$ and a matrix-valued morphism $\gamma$ such
that $c_{\bf t}(n) = v \gamma(z) w$ for all strings $z$
that are binary representations of $n$ (allowing leading zeros).
See \cite{Berstel&Reutenauer:2011} for more details.
The first few terms of $c_{\bf t}(n)$ are given
in Table~\ref{tab1}; it is sequence \seqnum{A360104} in the 
{\it On-Line Encyclopedia of Integer Sequences} (OEIS) \cite{Sloane:2023}.

\begin{table}[H]
\begin{center}
\begin{tabular}{c|ccccccccccccccccccccc}
$n$ & 0& 1& 2& 3& 4& 5& 6& 7& 8& 9&10&11&12&13&14&15&16&17&18&19\\
\hline
$c_{\bf t} (n)$ &  1& 2& 3& 2& 4& 4& 6& 8&12& 8&12&16&14&18&18&18&28&20&20&28
\end{tabular}
\end{center}
\caption{First few values of $c_{\bf t}(n)$.}
\label{tab1}
\end{table}

Krawchuk and Rampersad conjectured that 
$\limsup c_{\bf t}(n)/n = 2$ and
$\liminf c_{\bf t}(n)/n = 4/3$.
In this paper we prove these two conjectures.
The conjectures follow from two inequalities:
$c_{\bf t}(n) \leq 2n-4$ for $n \geq 3$ and
$c_{\bf t}(n) \geq {4\over 3}n - 4$ for $n \geq 0$, which we prove
in Section~\ref{three} and \ref{four}, respectively.

The method of linear representations (as discussed in, for example,
\cite{Shallit:2022}) is extremely powerful for proving statements
about automatic sequences, but it has some limitations.   While
it can usually be used to prove various equalities, proving
inequalities is typically more problematic.   In this paper we use
traditional techniques based on induction, together with linear
representations, to prove the desired inequalities.

\section{Preliminary results}
\label{two}

Throughout we abbreviate $c_{\bf t} (n)$ by $c(n)$.

\begin{proposition}
We have
\begin{align}
c(2^k) &= 2^{k+1} - 4 , && \quad\quad (k \geq 2) \label{2k}\\
c(2^k+3) &= {5\over 3}\cdot 2^k - {2\over 3} (-1)^k + 2, && \quad\quad (k \geq 2)
\label{2k3}\\
c(2^k+1) &= {4 \over 3} \cdot 2^k + {2\over 3} (-1)^k - 2, && \quad\quad (k \geq 2)
\label{2k1}\\
c(2^k-3) &= {5 \over 3} \cdot 2^k + {1\over 3} (-1)^k - 5, && \quad\quad (k \geq 5)
\label{2km3}\\
c(2^k-1) &= {4 \over 3} \cdot 2^k - {1\over 3} (-1)^k - 3, && \quad\quad (k \geq 2)
\label{2km1} \\
%c(2^k-2) &= {4 \over 3} \cdot 2^k + {2\over 3} (-1)^k - 4, && \quad\quad (k \geq 3)
c(2^k - 5) &= {3\over 2} \cdot 2^k + (-1)^k - 7, && \quad\quad (k \geq 5) 
\label{2km5}\\
c(2^k - 7) &= {3 \over 2} \cdot 2^k - (-1)^k - 9, && \quad\quad (k \geq 3) 
\label{2km7} \\
%c(3\cdot 2^k - 1) &= {16\over 3} \cdot 2^k - {2\over 3} (-1)^k - 6,
%	\quad\quad (k \geq 1) \\
%c(2^k + 7) &= {3 \over 2} \cdot 2^k + 2\cdot (-1)^k + 10,
%\quad\quad (k \geq 4) \label{2k7} \\
c(12\cdot 2^k - 3) &= {{56} \over 3} \cdot 2^k - {2 \over 3} (-1)^k -10, && \quad\quad (k \geq 0). \label{12km3}
\end{align}
\end{proposition}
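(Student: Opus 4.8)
The plan is to use the rank-$50$ linear representation $(v,\gamma,w)$ for $c$, so that $c(n)=v\,\gamma(z)\,w$ for every binary representation $z$ of $n$ (leading zeros allowed). The key observation is that in each of the eight families the argument has a binary representation of a uniform shape $p\,t^{m}\,s$, where $p$ and $s$ are fixed finite words, $t\in\{0,1\}$ is a single digit, and the exponent $m$ depends linearly on $k$. For instance $2^k\leftrightarrow 1\,0^{k}$, $\;2^k+1\leftrightarrow 1\,0^{k-1}1$, $\;2^k+3\leftrightarrow 1\,0^{k-2}11$, $\;2^k-1\leftrightarrow 1^{k}$, $\;2^k-3\leftrightarrow 1^{k-2}01$, $\;2^k-5\leftrightarrow 1^{k-3}011$, $\;2^k-7\leftrightarrow 1^{k-3}001$, and $12\cdot 2^k-3=3(2^{k+2}-1)\leftrightarrow 10\,1^{k}01$. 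Hence, writing $n_k$ for the argument in a given family,
\[
  c(n_k)\;=\;\bigl(v\,\gamma(p)\bigr)\,\gamma(t)^{m}\,\bigl(\gamma(s)\,w\bigr),
\]
a scalar sequence in $m$, and therefore in $k$, that satisfies a linear recurrence with constant rational coefficients whose characteristic polynomial divides the minimal polynomial of the $50\times 50$ matrix $\gamma(t)$.

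Now observe that every closed form asserted in the proposition is a rational combination of $2^{k}$, $(-1)^{k}$, and $1$; equivalently, each of them satisfies the order-three linear recurrence
\[
  a_{k+3}\;=\;2a_{k+2}+a_{k+1}-2a_k ,
\]
whose characteristic polynomial is $(x-2)(x-1)(x+1)=x^{3}-2x^{2}-x+2$. So for each family it suffices to check two things: (i) that the sequence $k\mapsto c(n_k)$ itself satisfies this same recurrence for $k\ge k_0$, and (ii) that $c(n_k)$ agrees with the asserted formula at the three consecutive values $k=k_0,\ k_0+1,\ k_0+2$, where $k_0$ is the stated lower bound. Since the recurrence has invertible leading (and trailing) coefficient, two of its solutions agreeing at three consecutive indices coincide, so these two checks give the formula for all $k\ge k_0$. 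For (ii) one simply reads off the relevant values of $c$ from Table~\ref{tab1} or computes them from the linear representation: e.g., for \eqref{2k} one verifies $c(4)=4$, $c(8)=12$, $c(16)=28$ against $2^{k+1}-4$. For (i), the needed identity is $v\,\gamma(p)\,\gamma(t)^{m}\bigl(\gamma(t)^{3}-2\gamma(t)^{2}-\gamma(t)+2I\bigr)\gamma(s)\,w=0$ for all large $m$, which is a finite linear-algebra computation with the explicit matrices; in practice one computes enough initial terms of the scalar sequence $c(n_k)$, extracts its minimal linear recurrence (by, say, Berlekamp--Massey over $\Que$), and confirms that $x^{3}-2x^{2}-x+2$ is a multiple of that recurrence's characteristic polynomial once one is past the transient.

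The genuinely delicate part is the bookkeeping, especially pinning down the correct thresholds $k_0$. The point is that $\gamma(t)^{m}$ generally has a transient: its minimal polynomial relative to the vectors involved is $x^{a}$ times the relevant factor $(x-2)(x-1)(x+1)$ (possibly times further factors that happen not to contribute to this particular scalar), so $c(n_k)$ becomes a pure combination of $2^{k},(-1)^{k},1$ only once $m=m(k)$ exceeds that transient length---this is exactly why \eqref{2km3} and \eqref{2km5} need $k\ge 5$ while \eqref{2km1} already holds for $k\ge 2$. One must also get the exponent shifts right (off-by-one and off-by-two between $k$ and $m$) and, for \eqref{12km3}, work out the binary expansion $10\,1^{k}01$ together with its small cases explicitly. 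None of this is conceptually hard, and all of it can be carried out mechanically from the explicit rank-$50$ representation or with software for manipulating linear representations; I expect this matching of binary representations to the $k$-indexing, rather than any deeper issue, to be the only real obstacle.
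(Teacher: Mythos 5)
Your proposal is correct and is essentially the paper's own approach: the paper simply cites the technique of Krawchuk and Rampersad, which is exactly this linear-representation trick (evaluate $v\,\gamma(p)\,\gamma(t)^m\,\gamma(s)\,w$ for inputs of shape prefix--repeated digit--suffix, use the eventual recurrence coming from the minimal/characteristic polynomial so the value is a combination of $2^k$, $(-1)^k$, $1$ past a transient, and fix the constants from small cases), the same method the paper spells out later for Lemma~\ref{six}(iv)--(vii). Your extra care about transients, thresholds $k_0$, and certifying the recurrence by a finite matrix identity (rather than by Berlekamp--Massey alone) is exactly the right way to make that cited technique rigorous.
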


\begin{proof}
Eq.~\eqref{2k} can be found in \cite[Prop.~1]{Krawchuk&Rampersad:2018}.

The remaining equalities can be proved exactly as in that paper, using
the same technique.
\end{proof}

The following three identities are crucial to our approach.
\begin{lemma}
There are $2$-automatic sequences $a_0, a_1, a_3$ such that
\begin{align}
c(2i) &= 2c(i) + a_0 (i)  \label{eq0}\\
c(4i+1) &= 2c(i+1) + c(2i+1) + a_1(i) \label{eq1} \\
c(4i+3) &= {1 \over 2} c(2i) + c(2i+3) + {1\over 2} c(2i+4) + a_3(i)
\label{eq3}  
%c(4i+3) &= c(2i) + c(2i+1) + a_3 (i) \label{eq3} \\
%c(8i+1) &= 2c(2i+1) + c(4i+1) + a_1(i) \label{eq1}\\
%c(8i+5) &= 2c(2i+2) + c(4i+3) + a_5(i) \label{eq5}
\end{align}
and furthermore 
\begin{align}
2 & \leq a_0(i) \leq 6, &&\quad\quad (i\geq 3) \label{b0}\\
0 & \leq a_1(i) \leq 10, && \quad\quad (i\geq 2) \label{b1}\\
-1 & \leq a_3(i) \leq 3, &&\quad\quad (i\geq 1) \label{b3}.
%4 & \leq a_3(i) \leq 8, &\quad\quad (i \geq ??) \label{b3} \\
%2 & \leq a_1(i) \leq 6, &\quad\quad (i \geq ??) \label{b1} \\
%0 & \leq a_5(i) \leq 10, & \quad\quad (i\geq ??) \label{b5}
\end{align}
\end{lemma}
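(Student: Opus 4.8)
The plan is to obtain each $a_j$ as an explicit difference of $2$-regular sequences, to prove it is $2$-automatic by exhibiting a finite automaton that computes it, and then to read the bounds off that automaton.

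First I would simply \emph{define}
\[
a_0(i) := c(2i) - 2c(i), \qquad a_1(i) := c(4i+1) - 2c(i+1) - c(2i+1),
\]
and
\[
a_3(i) := c(4i+3) - \tfrac12 c(2i) - c(2i+3) - \tfrac12 c(2i+4),
\]
so that the identities \eqref{eq0}, \eqref{eq1}, \eqref{eq3} hold by construction; the entire content of the lemma is then that these three sequences are $2$-automatic and obey the stated bounds. Since $c$ has a linear representation (the rank-$50$ representation mentioned in the introduction) and linear representations are effectively closed under the operations used here --- reindexing along the arithmetic progressions $i\mapsto 2i$, $i\mapsto 4i+1$, $i\mapsto 4i+3$ (each amounting to right-multiplying the terminal vector $w$ by $\gamma(0)$, $\gamma(0)\gamma(1)$, or $\gamma(1)^2$), the shifts $i\mapsto i+1$, $i\mapsto i+3$, $i\mapsto i+4$ (finite-state operations on base-$2$ representations, slightly more involved because of carries), multiplication by a rational scalar, and addition --- each of $a_0,a_1,a_3$ inherits a linear representation over $\Que$ of modest, if not small, rank.

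Next comes the crucial step. A $\Que$-regular sequence is $2$-automatic precisely when it takes only finitely many values, so it suffices to show each $a_j$ is bounded. I would do this by the standard ``guess and verify'' method: compute a few hundred initial values of $a_j$, conjecture a minimal deterministic finite automaton with output $\mathcal{A}_j$ producing them, and then \emph{prove} the guess correct by forming a linear representation for the difference $a_j(i) - \mathcal{A}_j(i)$ and checking that it represents the identically-zero function. That last check is a finite linear-algebra computation: compute a spanning set for the row space $\{\, v\gamma(z) : z \in \{0,1\}^* \,\}$ by breadth-first closure under $\gamma(0)$ and $\gamma(1)$, and verify that the terminal vector is orthogonal to every vector in it. This certifies $a_j = \mathcal{A}_j$ as functions, hence $a_j$ is $2$-automatic. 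I expect this verification to be the main obstacle, not because any single step is deep, but because there is no a priori reason that these particular combinations of dilates and shifts of $c$ should be bounded rather than grow linearly; the argument rests entirely on the right-hand sides of \eqref{eq1} and \eqref{eq3} being ``balanced'' so that the linear parts cancel, which only the explicit automaton makes visible. The large rank of the representation of $c$ makes these computations ones to carry out by machine.

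Finally, with $\mathcal{A}_0,\mathcal{A}_1,\mathcal{A}_3$ in hand, the bounds \eqref{b0}--\eqref{b3} become immediate finite checks: each automaton has a finite output alphabet, so its exact range is obtained by inspection of the reachable states, and the asserted inequalities hold for all large $i$ directly from the automaton. The finitely many small indices that are excluded --- $i<3$ for $a_0$, $i<2$ for $a_1$, $i<1$ for $a_3$ --- are precisely those on which the stated inequalities fail, and this is confirmed by evaluating $c$ on the finitely many small arguments involved.
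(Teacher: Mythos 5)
Your proposal matches the paper's proof in essence: both define $a_0,a_1,a_3$ by the stated identities, obtain a linear representation for each difference from linear representations of the constituent subsequences of $c$, certify $2$-automaticity by a finite linear-algebra computation that yields a DFAO, and then read the bounds \eqref{b0}--\eqref{b3} off the automaton's outputs, checking the small excluded indices separately. The only minor difference is procedural: the paper minimizes the representation and applies the ``semigroup trick'' to construct the DFAO directly, whereas you guess the DFAO from initial values and certify it with a zero-test on the difference --- both standard and equally valid.
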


\begin{proof}
For each relation, we compute a linear representation for each
term except $a_0$ (resp., $a_1, a_3$) using {\tt Walnut} 
\cite{Mousavi:2016}; then we compute a linear
representation for the difference using block matrices.  Then
we minimize the linear representation and use the ``semigroup trick''
(see, e.g., \cite[\S 4.11]{Shallit:2022}) to verify that
the sequence is automatic and find a
deterministic finite automaton with output (DFAO) for $a_0$ (resp., $a_1, a_3$).

We provide more details about the computation of $a_0$ (in part 
because we will need them in what follows).  We start with
the following
{\tt Walnut} code:
\begin{verbatim}
def tmfactoreq "At t<n => T[i+t]=T[j+t]":
def tmconj "Et t<=n & $tmfactoreq(j,i+t,n-t) & $tmfactoreq(i,(j+n)-t,t)":
def tmc "Aj j<i => ~$tmconj(i,j,n)":
\end{verbatim}
Here {\tt tmc(i,n)} asserts that ${\bf t}[i..i+n-1]$ is the
first occurrence of a factor that is cyclically equivalent to it.
Hence, by counting the number of $i$ for which it evaluates to
{\tt TRUE}, we determine the number of length-$n$ cyclic factors.
Let us compute the number of cyclic factors of various lengths
using {\tt Walnut}:
\begin{verbatim}
eval tmcc n "$tmc(i,n)":
eval tmcc2 n "$tmc(i,2*n)":
\end{verbatim}

% 4n has rank 62, n+1 has rank 44, 2n+1 has rank 47 
% 4n+3 has rank 47, 2n+3 has rank 42, 2n+4 has rank 82

The first two commands produce linear representations for $c(n)$ and
$c(2n)$, in {\tt Maple} format.  They are of rank $50$ and
$60$, respectively.  From this we can easily compute a linear
representation for $a_0(n) := c(2n) - 2c(n)$, of rank $110$.
Using Sch\"utzenberger's algorithm \cite[Chap.~2]{Berstel&Reutenauer:2011},
this representation can be minimized into a linear representation
$(v, \gamma, w)$ of rank $7$, as follows:
\begin{align*}
v^T &= \left[ \begin{smallarray}{ccccccc}
1&0&0&0&0&0&0
	\end{smallarray} \right] 
&
\gamma(0) & = \left[ \begin{smallarray}{ccccccc}
1&0&0&0&0&0&0\\
 0&0&1&0&0&0&0\\
 0&0&0&0&1&0&0\\
 0&0&0&0&0&1&0\\
 0&0&0&0&1&0&0\\
 0&0&0&1&0&0&0\\
 0&0&0&0&0&0&1
	\end{smallarray} \right]
&
\gamma(1) &= \left[\begin{smallarray}{ccccccc} 
 0&  1&  0&  0&  0&  0&  0\\
 0&  0&  0&  1&  0&  0&  0\\
 0&  0&  0&  0&  1&  0&  0\\
 0&  0&  0&  0&  1/2&  0&  0\\[1pt]
 0&  0&  0&  0&  1&  0&  0\\
 0&  0&  0&  0&  0&  0&  1\\[1pt]
 0&  0&  0&  0&  1/2&  0&  0
\end{smallarray}\right]
&
w &= \left[ \begin{smallarray}{c}
-1\\
-1\\
-2\\
2\\
4\\
2\\
6
\end{smallarray} 
\right] .
\end{align*}
We can now use the ``semigroup trick'' to show that $a_0(n)$ is
$2$-automatic and find a
DFAO for it.  It has $8$ states and
is displayed in Figure~\ref{aut0}.
From examining the result, we see that $a_0(n) \in \{-2, -1, 2, 4, 6\}$
and furthermore $a_0(n) \in \{ 2, 4, 6\}$ for $n \geq 3$.
This proves Eqs.~\eqref{eq0} and \eqref{b0}.

\begin{figure}[H]
\begin{center}
\includegraphics[width=6in]{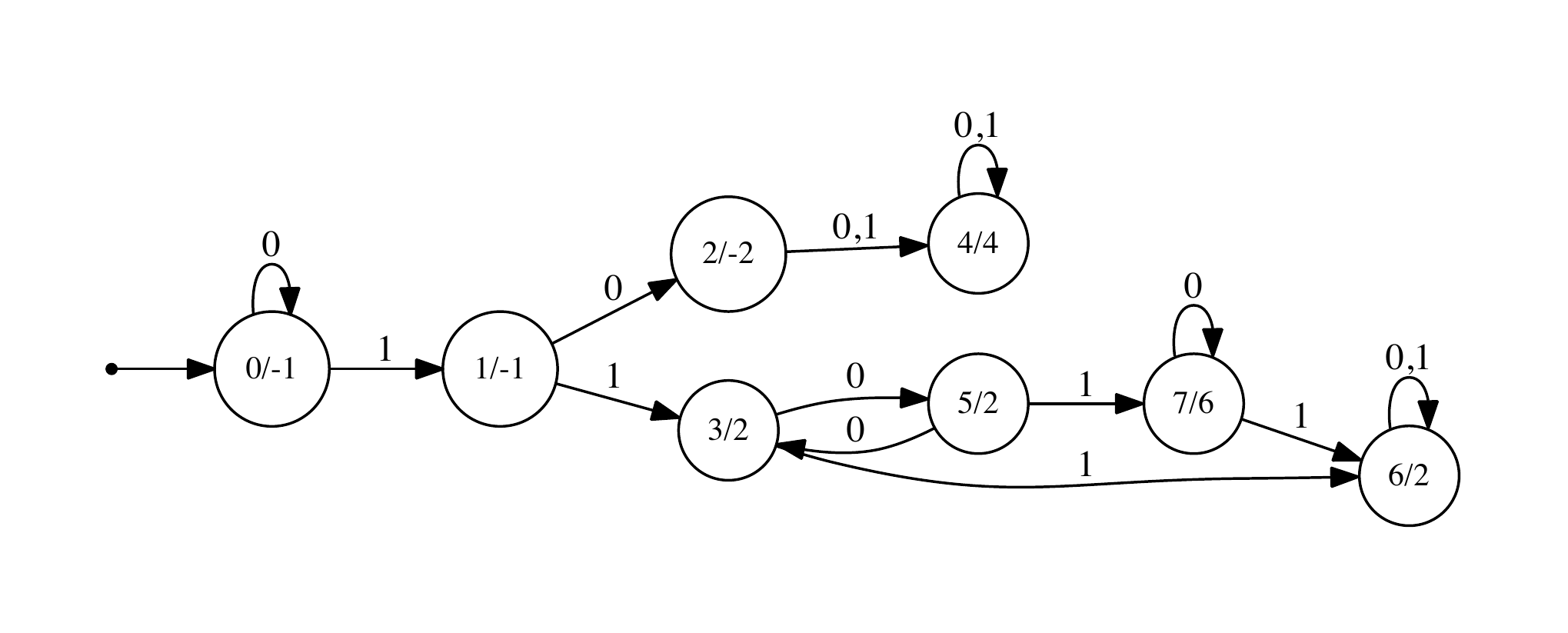}
\end{center}
\caption{DFAO for $a_0 (n)$.}
\label{aut0}
\end{figure}

Using a similar procedure, and the following {\tt Walnut} commands,
we can find the
DFAO's for $a_1 (n)$ and $a_3(n)$, which are given in Figures~\ref{aut1}
and \ref{aut3}.
\begin{verbatim}
eval tmcc41 n "$tmc(i,4*n+1)":
eval tmcc1 n "$tmc(i,n+1)":
eval tmcc21 n "$tmc(i,2*n+1)":
eval tmcc43 n "$tmc(i,4*n+3)":
eval tmcc23 n "$tmc(i,2*n+3)":
eval tmcc24 n "$tmc(i,2*n+4)":
\end{verbatim}

\begin{figure}[H]
\begin{center}
\includegraphics[width=6.5in]{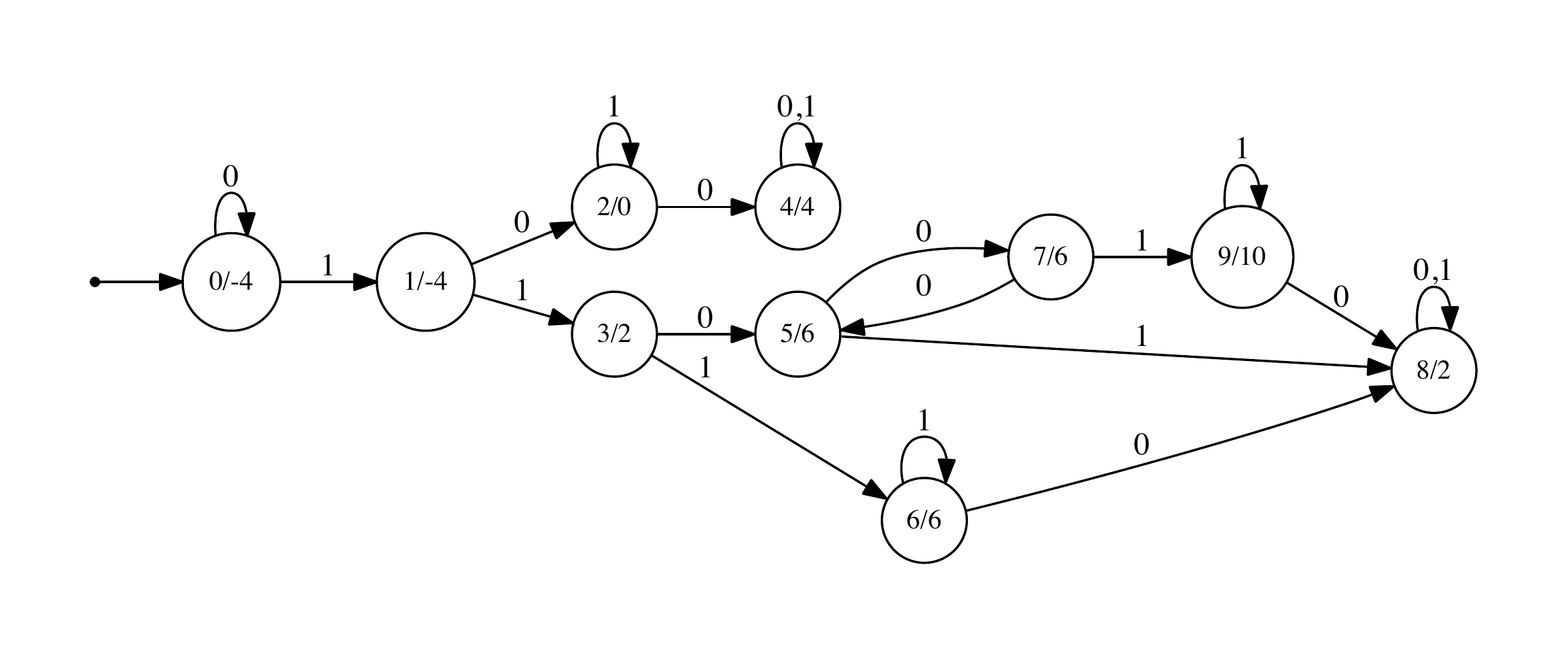}
\end{center}
\caption{DFAO for $a_1 (n)$.}
\label{aut1}
\end{figure}

\begin{figure}[H]
\begin{center}
\includegraphics[width=6.5in]{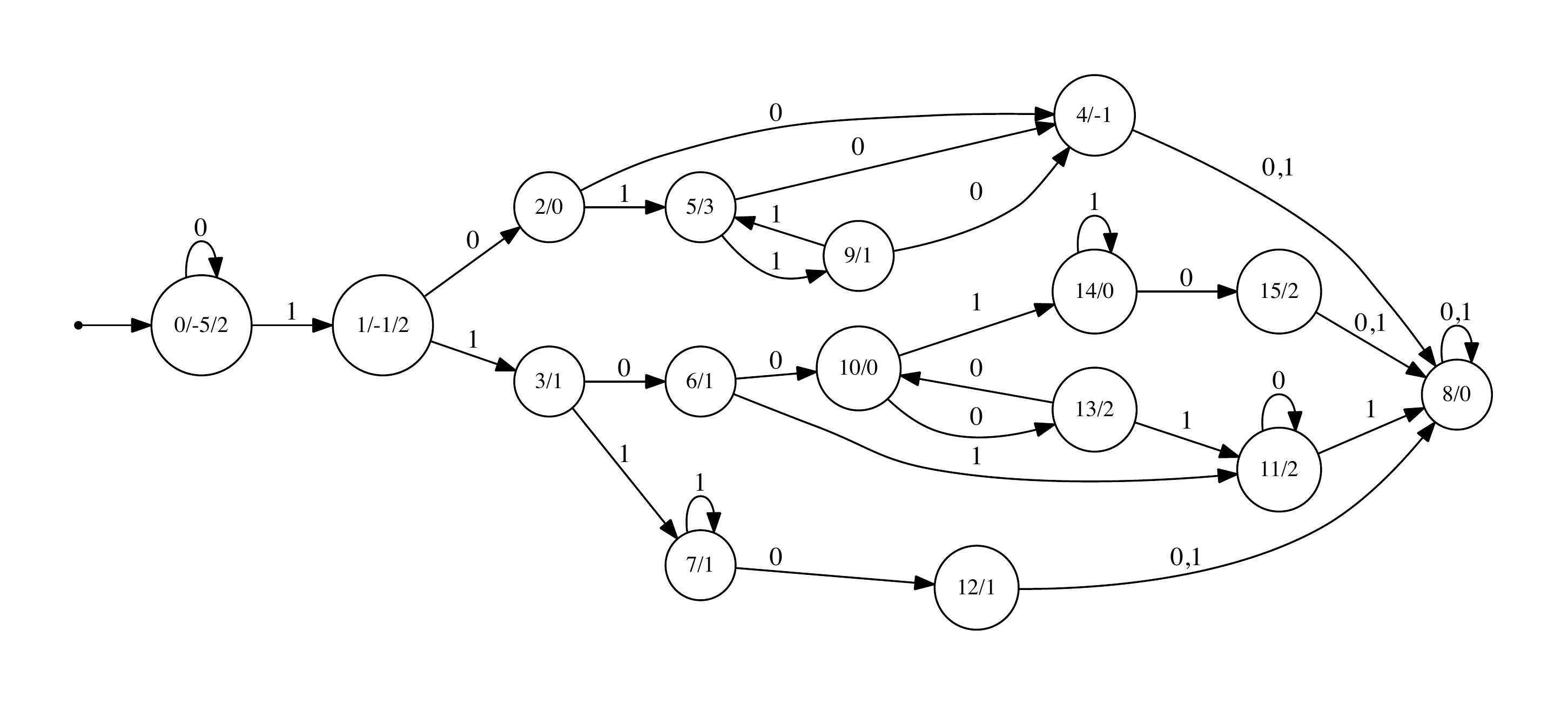}
\end{center}
\caption{DFAO for $a_3 (n)$.}
\label{aut3}
\end{figure}
Examining the results proves Eqs.~\eqref{eq1}, \eqref{eq3}, \eqref{b1},
\eqref{b3}.
\end{proof}

\section{Upper bound}
\label{three}

\begin{theorem}
$c(n) \leq 2n-4$ for all $n \geq 3$.
\end{theorem}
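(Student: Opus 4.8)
The plan is to argue by strong induction on $n$, turning the three identities \eqref{eq0}, \eqref{eq1}, and \eqref{eq3} into a reduction of $c(n)$ to values $c(m)$ with $3 \le m < n$. Since the bounds \eqref{b0}--\eqref{b3} on the correction terms only hold for $i$ above a small threshold, and since the maps $n = 2i \mapsto i$, $\ n = 4i+1 \mapsto \{i+1, 2i+1\}$, and $n = 4i+3 \mapsto \{2i, 2i+3, 2i+4\}$ genuinely decrease $n$ only once $i$ is large enough, I would first verify $c(n) \le 2n-4$ directly for all $n$ up to some explicit bound $N_0$ (from the linear representation of $c$, or with \texttt{Walnut}), this range serving as the base of the induction. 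For $n > N_0$ one writes $n$ in the form $2i$, $4i+1$, or $4i+3$ according to $n \bmod 4$; in every case $i$ is large, each argument on the right-hand side lies in $[3,n)$, and the induction hypothesis applies to all of them.

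The difficulty is that substituting the induction hypothesis together with the crude bounds \eqref{b0}--\eqref{b3} does not quite close the induction: in each residue class one loses a small additive amount. For instance, \eqref{eq0} gives only $c(2i) \le 2(2i-4) + 6 = 4i-2 = 2n+2$, and \eqref{eq1}, \eqref{eq3} are off by a comparable $O(1)$ amount. To recover the missing slack I would strengthen the statement proved by induction, adding a characterization of the extremal cases: namely that $c(n) = 2n-4$ holds precisely for $n = 3$ and for $n = 2^k$ with $k \ge 2$ (consistent with \eqref{2k}, and with the explicit formulas of the Proposition, which show every other special family stays strictly below $2n-4$), together, if needed, with sharper estimates of the form $c(n) \le 2n-6$ away from a short explicit list of $n$.

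With this strengthening in place, the inductive step becomes a case analysis keyed to the \emph{value} of the automatic correction term: when $a_0(i)$ (resp.\ $a_1(i)$, $a_3(i)$) is small the crude estimate already suffices, and when it is near its maximum one needs the reduced arguments to avoid the extremal value, i.e.\ none of them may equal $3$ or a power of $2$. So the heart of the argument is a compatibility check: using the DFAOs for $a_0, a_1, a_3$ from the Lemma, verify (via \texttt{Walnut}, or by inspecting the relevant product automaton) that large values of the correction terms never occur at indices $i$ for which $i$, $i+1$, $2i+1$, $2i$, $2i+3$, or $2i+4$ — as appropriate — is extremal or near-extremal. I expect this bookkeeping, together with making the strengthened induction hypothesis genuinely self-propagating, to be the main obstacle; once it is settled, each residue class closes with a one-line computation, and combining the theorem with \eqref{2k} yields $\limsup_{n\to\infty} c(n)/n = 2$.
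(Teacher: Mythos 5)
Your skeleton matches the paper's: strong induction through \eqref{eq0}--\eqref{eq3}, a computational base case, and a strengthened hypothesis that treats the powers of $2$ (where $c(2^k)=2^{k+1}-4$ is extremal) as exceptions; the paper's version of the strengthening is $c(n)\le 2n-7$ for $n\ge 12$ and $n$ not a power of $2$. The genuine gap is in how you propose to handle the inductive steps in which a reduced argument \emph{is} a power of $2$. Your plan is a DFAO compatibility check: large correction values should never occur at indices where a reduced argument is extremal or near-extremal. That check fails in the decisive case. Take $n=4i+1$ with $i+1=2^k$, i.e.\ $n=2^{k+2}-3$; comparing \eqref{2km3}, \eqref{2k}, \eqref{2km1} inside \eqref{eq1} gives $a_1(2^k-1)=6$ for all $k\ge 3$, so the correction term is not small precisely where the reduced argument $i+1$ is extremal. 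Worse, no generic hypothesis of the form ``$c(m)\le 2m-C$ off the powers of $2$'' closes this case: with $c(2^k)=2\cdot 2^k-4$ exact and $a_1=6$, even the paper's own constant $C=7$ applied to the other argument yields only $2(2\cdot 2^k-4)+(2(2^{k+1}-1)-7)+6=2n-5$. What actually saves the case is that $c(2^{k+1}-1)\approx \frac{4}{3}\cdot 2^{k+1}$, far below $2m-7$, and that information comes only from the closed forms of the Proposition (Eqs.~\eqref{2k3}, \eqref{2km3}, \eqref{2km1}, \eqref{2km5}, \eqref{2km7}), not from any induction hypothesis. The paper dispatches every case where a reduced argument lies in $P_2$ by observing that $n$ then has the special form $2^k\pm 3$, $2^k-1$, or $2^k-5$ and quoting the corresponding exact formula; your proposal never brings these formulas into the inductive step, and without them (or an equally exact substitute along these infinite families) the induction cannot be closed.

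Two smaller calibration problems point the same way. Your suggested strengthening is not self-propagating: assuming $c(m)\le 2m-6$ for both reduced arguments in the case $n=4i+1$ gives only $2(2(i+1)-6)+(2(2i+1)-6)+10=2n-4$, and the bare equality characterization ($c(m)\le 2m-4$, strict off powers of $2$) gives only $2n-3$ at $n=2^{k+2}-3$; the constant must be pushed to $2n-7$, as in the paper, before the $4i+1$ and $4i+3$ cases return exactly what was assumed. (There is also a harmless slip: $2(2i-4)+6=4i-2=2n-2$, not $2n+2$, though your conclusion that the naive bound misses is correct.) So the architecture of your argument is right, but the proposed co-occurrence check on $a_0,a_1,a_3$ is not an adequate replacement for the explicit evaluations of $c$ near powers of two, and that is the missing ingredient.
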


\begin{proof}
By induction on $n$, using Eqs.~\eqref{eq0}--\eqref{eq3} and
\eqref{b1}--\eqref{b3}.   However, the claim of the theorem does not seem
to be strong enough to carry out the induction, so we actually prove
the following stronger claim by induction:
\begin{equation}
c(n) \leq 2n-7 \text{ if } n \geq 12 \text{ and } n \not\in P_2,
\label{induc1}
\end{equation}
where $P_2 = \{ 2^i \suchthat i \geq 0 \} = \{1,2,4,8,16,\ldots \}$.
The base case is $n \leq 44$; we can easily check that 
Assertion~\eqref{induc1}
holds for these $n$.

Now assume $n \geq 45$.  There are three
cases to consider.

\bigskip

\noindent {\it Case 1:}
$n \equiv \modd{0} {2}$, $n = 2i$.  

Case 1a:  Assume
$n/2 \not\in P_2$.   Then 
\begin{align*}
c(n) &= c(2i) = 2c(i) + a_0 (i)\\ 
&=  2 c(n/2) + a_0 (n/2) \\
&\leq  2c(n/2) + 6  \\
&\leq 2 (n-7) + 6 \quad\quad \text{(by induction, since $n/2 \geq 22$)} \\
&= 2n-8 \leq 2n-7.
\end{align*}

Case 1b:  If $n/2 \in P_2$, then $n= 2^k$ for some $k \geq 1$.  Since
$n \geq 45$ we have $k \geq 6$.  Hence the desired
bound $c(n) \leq 2n-4$ follows from Eq.~\eqref{2k}.

\bigskip

\noindent{\it Case 2:}  $n \equiv \modd{1} {4}$, $n = 4i+1$.

\smallskip

Case 2a:  Assume 
$(n+3)/4 \not\in P_2$ and $(n+1)/2 \not\in P_2$.
Then we have
\begin{align*}
c(n) &= c(4i+1) = 2c(i+1) + c(2i+1) + a_1(i) \\
&\leq 2c( (n+3)/4) + c( (n+1)/2) + a_1((n-1)/4) \\
&\leq 2 ( (n+3)/2 - 7) + (n+1 - 7) + 10 \quad\quad \text{(by induction,
since $(n+1)/2 \geq (n+3)/4 \geq 12$)} \\
&= 2n -7 .
\end{align*}

Case 2b: If $(n+3)/4 \in P_2$, then $n = 2^k - 3$ for some $k \geq 2$. 
Since $n \geq 45$ we must have $k \geq 6$.   
Then Eq.~\eqref{2km3} implies
the desired bound.

\smallskip

Case 2c:  If $(n+1)/2 \in P_2$ then $n = 2^k - 1$ for some $k \geq 1$.
Since $n \geq 45$ we have $k \geq 6$.   Hence
by Eq.~\eqref{2km1} we get
the desired bound.

\bigskip

\noindent{\it Case 3:}  $n \equiv \modd{3} {4}$, $n = 4i+3$.   

\smallskip

Case 3a:  Assume
$(n-3)/2 \not\in P_2$ and $(n+3)/2 \not\in P_2$ and $(n+5)/2 \not\in P_2$.
Then we have
\begin{align*}
c(n) &= c(4i+3) = {1 \over 2} c(2i) + c(2i+3) + {1\over 2} c(2i+4) + a_3(i) \\
&= {1\over 2} c((n-3)/2) + c((n+3)/2) + {1 \over 2} c((n+5)/2) + a_3 ((n-3)/4) \\
&\leq {1\over 2} (n-10) + (n+3-7) + {1\over2} (n+5-7) + 3  \\
&\quad\quad \text{(by induction, since $(n+5)/2\geq (n+3)/2 \geq (n-3)/2 \geq 21$)} \\
&= 2n -7.
\end{align*}

\smallskip

Case 3b:  If $(n-3)/2 \in P_2$ then $n = 2^k + 3$ for some $k\geq 1$.  Since
$n \geq 45$ we have $k \geq 6$.  So
the desired bound follows from
Eq.~\eqref{2k3}.

\smallskip

Case 3c:  If $(n+3)/2 \in P_2$ then $n = 2^k - 3$ for some $k \geq 1$.
Since $n \geq 45$ we have $k \geq 6$.
So the desired bound follows from
Eq.~\eqref{2km3}.

\smallskip

Case 3d:  If $(n+5)/2 \in P_2$ then $n = 2^k - 5$ for some $k \geq 1$.
Since $k \geq 45$ we have $k \geq 6$.
So the desired bound follows from
Eq.~\eqref{2km5}.

\bigskip

We have now completed the proof of Assertion~\eqref{induc1}.  To
finish the proof of the theorem, we only need observe that 
if $n \geq 8$ is a power of $2$, then $c(n) = 2n-4$,
and check that $c(n) \leq 2n-4$ for $3 \leq n \leq 11$.
\end{proof}

We now get the first conjecture of Krawchuk and Rampersad as an
immediate corollary:
\begin{corollary}
$\limsup_{n \rightarrow \infty} c(n)/n = 2$.
\end{corollary}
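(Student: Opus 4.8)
The plan is to deduce both inequalities $\limsup c(n)/n \le 2$ and $\limsup c(n)/n \ge 2$ from results already established, and then combine them.

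First I would use the upper bound just proved. By the Theorem, $c(n) \le 2n - 4$ for all $n \ge 3$, so $c(n)/n \le 2 - 4/n < 2$ for every such $n$. Taking the limit superior of a sequence all of whose terms (past a finite initial segment) are bounded above by $2$ gives $\limsup_{n \to \infty} c(n)/n \le 2$.

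Next I would exhibit a subsequence witnessing that the bound $2$ is actually approached. The natural candidate is $n = 2^k$: by Eq.~\eqref{2k} of the Proposition, $c(2^k) = 2^{k+1} - 4$ for $k \ge 2$, hence
\[
\frac{c(2^k)}{2^k} = 2 - \frac{4}{2^k} \longrightarrow 2 \qquad (k \to \infty).
\]
Since the limit superior of a sequence is at least the limit of any convergent subsequence, this yields $\limsup_{n \to \infty} c(n)/n \ge 2$.

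Combining the two inequalities gives $\limsup_{n \to \infty} c(n)/n = 2$, as claimed. I do not anticipate any real obstacle here: the entire content of the corollary is packaged in the Theorem and in Eq.~\eqref{2k}, and the argument is just the two-sided squeeze on the limit superior; the only thing to be mildly careful about is that $c(n) \le 2n-4$ is asserted only for $n \ge 3$, but finitely many small exceptional values never affect a $\limsup$.
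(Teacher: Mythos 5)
Your argument is correct and is exactly what the paper intends: the Theorem gives $\limsup c(n)/n \le 2$, and Eq.~\eqref{2k} (the values $c(2^k)=2^{k+1}-4$) supplies the subsequence showing the bound is attained, which is why the paper states the corollary as immediate. No issues.
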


\section{Lower bound}
\label{four}

In this section we prove the the corresponding lower bound on $c(n)$,
namely
\begin{theorem}
$c(n) \geq {4\over 3} n - 4$ for $n \geq 0$.
\label{five}
\end{theorem}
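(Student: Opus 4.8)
The plan is to mirror the structure of the upper-bound proof almost exactly, but now pushing the recursions in the direction that gives lower bounds. As in the proof of the upper bound, the naive claim $c(n) \geq \frac43 n - 4$ is probably too weak to survive the induction (the constants in the recursions \eqref{eq0}--\eqref{eq3}, together with the lower endpoints in \eqref{b0}--\eqref{b3}, will leak a little each time we descend), so I would first isolate a finite set $E$ of ``exceptional'' arguments --- almost certainly built from $P_2$ together with the families $2^k \pm 1$, $2^k \pm 3$, $2^k-5$, $2^k-7$, and $12\cdot 2^k-3$ that appear in the Proposition --- and prove by induction a strengthened statement of the form
\begin{equation*}
c(n) \geq \tfrac43 n - c_0 \quad\text{for } n \geq n_0,\ n \notin E,
\end{equation*}
with a smaller additive slack $c_0$ (I expect something like $c(n)\geq \frac43 n - 2$, or a bound with an extra $+\epsilon n$ term) than the $-4$ we ultimately want. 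The base case is then a finite check up to some explicit $n_0$, carried out directly from the linear representation for $c(n)$ (or simply from a long prefix computed in {\tt Walnut}).

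For the inductive step I would split on the residue of $n$ modulo $4$, exactly as in Cases 1--3 above. When $n=2i$ I use $c(n)=2c(i)+a_0(i)\geq 2c(i)+2$ by \eqref{b0}; when $n=4i+1$ I use $c(n)=2c(i+1)+c(2i+1)+a_1(i)\geq 2c(i+1)+c(2i+1)$ by \eqref{b1}; when $n=4i+3$ I use $c(n)=\frac12 c(2i)+c(2i+3)+\frac12 c(2i+4)+a_3(i)\geq \frac12 c(2i)+c(2i+3)+\frac12 c(2i+4)-1$ by \eqref{b3}. In each case, substituting the inductive hypothesis $c(m)\geq \frac43 m - c_0$ for the smaller arguments and simplifying yields $\frac43 n$ plus a constant, and the point of choosing the strengthened slack is to make that constant land on the right side of the desired inequality. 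Whenever one of the smaller arguments falls into the exceptional set $E$ --- i.e.\ is a power of $2$ or one of the special forms --- I instead invoke the exact formula from the Proposition (Eqs.~\eqref{2k}--\eqref{12km3}), which in every such sub-case gives a value comfortably above $\frac43 n - 4$; this is precisely the role those eight identities play, and it is why the list in the Proposition looks the way it does.

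The main obstacle, as in the upper bound, is bookkeeping: getting the exceptional set $E$ exactly right so that (a) every ``generic'' descent actually stays inside the hypothesis region, and (b) every exceptional descent is covered by one of the closed-form identities. The subtle point is the half-integer coefficients in \eqref{eq3}: the arguments $2i$, $2i+3$, $2i+4$ have three different residues mod $2$, so when, say, $2i$ is a power of two but $2i+4$ is not (or vice versa), I must mix an exact formula with the inductive bound, and I need to have verified that the relevant power-of-two families land where \eqref{2k}, \eqref{2km5}, \eqref{2km7}, etc.\ apply. I would also need to double-check the $12\cdot 2^k-3$ family: it is the one identity not obviously of ``$2^k\pm$ small constant'' shape, which suggests it is needed to patch a single stubborn branch of the recursion (likely a Case-3 descent where $(n+3)/2$ or $(n+5)/2$ repeatedly halves down to $12$). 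Once $E$ and $n_0$ are pinned down, the arithmetic in each branch is routine linear algebra in $n$, and the corollary $\liminf c(n)/n = 4/3$ follows immediately by combining this bound with the values $c(2^k-1)=\frac43\cdot 2^k - \frac13(-1)^k - 3$ from \eqref{2km1}, which show the bound is tight along a subsequence.
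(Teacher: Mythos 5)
There is a genuine gap: your plan misses the central new difficulty of the lower bound, namely the set on which the bound is attained. The paper shows that $c(n)=\tfrac43 n-4$ exactly on the infinite set $J=\{(2^{2i+1}+1)2^j \suchthat i\geq 1,\ j\geq 0\}=\{9,18,33,36,66,72,\ldots\}$, and $J$ is \emph{not} contained in the union of the families $2^k\pm1$, $2^k\pm3$, $2^k-5$, $2^k-7$, $12\cdot 2^k-3$, $P_2$ that you propose to take as the exceptional set $E$. So your strengthened statement ``$c(n)\geq\tfrac43 n-c_0$ with $c_0<4$ for $n\notin E$'' is simply false: e.g.\ $c(18)=20=\tfrac43\cdot 18-4$ (and likewise $n=36,66,72,\ldots$), while $18\notin E$. (The variant with an extra $+\epsilon n$ is also false, since $\liminf c(n)/n=4/3$.) Any correct strengthening must exclude $J$, and handling the branches of the recursion that descend into $J$ requires exact evaluations of $c$ on $J$, $4J+3$, $2J+3$, and $2J-5$; these are not among the eight identities of the Proposition and are obtained in the paper by a separate argument (factoring $\gamma(1)\gamma(0)^{2i}\gamma(1)\gamma(0)^j$ and using the minimal polynomial of $\gamma(0)$). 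Your assertion that ``every exceptional descent is covered by one of the closed-form identities'' in the Proposition therefore fails.

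A second, quantitative problem is that your uniform-slack induction does not close. In the case $n=4i+1$ you propose using only $a_1(i)\geq 0$ from \eqref{b1}; but if all three quantities carry the same slack $s$, the recursion $c(4i+1)=2c(i+1)+c(2i+1)+a_1(i)$ yields $\tfrac43 n+\tfrac83-3s+a_1$, which is $\geq\tfrac43 n-s$ only when $a_1\geq 2s-\tfrac83$; with $s=2$ this needs $a_1\geq 2$, and with $s=4$ it needs $a_1\geq 6$, neither of which follows from $a_1\geq 0$. The paper resolves this with a parity-asymmetric hypothesis (even $n\notin J$: $c(n)\geq\tfrac43 n-2$; odd $n\notin A\cup B$, $n\geq 47$: $c(n)\geq(4n+16)/3$), together with the fact---read off the DFAO, beyond \eqref{b1}---that $a_1(m)=0$ only for $m=3\cdot2^k-1$, i.e.\ exactly when $n\in D=\{12\cdot 2^k-3\}$, which is then treated by Eq.~\eqref{12km3}. (So your guess about the role of the $12\cdot2^k-3$ identity is also off: it patches the vanishing of $a_1$ in the $n\equiv 1\pmod 4$ branch, not a Case-3 descent.) A similar leak occurs in the $n=4i+3$ branch, which is why the odd-case hypothesis must carry the stronger constant $+16/3$. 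Without identifying $J$, the asymmetric strengthened claims, and the extra closed forms on $4J+3$, $2J+3$, $2J-5$, the induction as you describe it cannot be completed.
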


The ideas are similar to those in the proof of the upper bound, but
a bit more complicated because the various exceptional sets 
are more intricate.

We need a lemma.
Define the following exceptional sets. 
\begin{align*}
A &= \{ 2^k - 1 \suchthat k \geq 1 \} = \{1,3,7,15,31, \ldots \} \\
B &= \{ 2^k + 1 \suchthat k \geq 2 \} = \{5,9,17,33, \ldots \}\\
D &= \{ 12 \cdot 2^k - 3 \suchthat k \geq 0 \} = \{9,21,45,93,\ldots \} \\
J &= \{ (2^{2i+1} +1)2^j \suchthat i \geq1, \ j \geq 0 \} =
	\{ 9, 18, 33, 36, 66, 72, 129, 132, 144, 258, \ldots \} .
\end{align*}

\begin{lemma}
\leavevmode
\begin{itemize}
\item[(i)]  If $n \in A$, then $c(n) \geq {4\over 3}n - 2$.
\item[(ii)] If $n \in B$, then $c(n) \geq {4\over 3}n - 4$.
\item[(iii)] If $n \in D$, then 
$c(n) = 2c((n+3)/4) + c((n+1)/2)$.
\item[(iv)] If $n \in J$, say $n = (2^{2i+1} + 1)2^j$, then
$c( n ) =  {8 \over 3} 2^{2i+j} + {4 \over 3} 2^j - 4 =
{4 \over 3} n - 4$.
for $i \geq 1$, $j \geq 0$.
\item[(v)] If $n \in 4J+3$, say $n = (2^{2i+1} +1) 2^{j+2} + 3$, and
$(i,j) \not= (1,0)$ (i.e., $n \not=39$), then
$c(n) = (104 \cdot 2^{2i+j} + 64 \cdot 2^j + 4\cdot 2^{2i} (-1)^j 
-10 \cdot (-1)^j + 18)/9 \geq (4n+16)/3$.
\item[(vi)] If $n \in 2J+3$, say $n = (2^{2i+1} +1) 2^{j+1} + 3$, and
furthermore $j \geq 1$, then
$c(n) = (52\cdot 2^{2i+j} + 32\cdot 2^j -4 \cdot 2^{2i} (-1)^j 
+ 10\cdot (-1)^j + 18)/9$.
\item[(vii)] If $n \in 2J-5$, say $n = (2^{2i+1} +1) 2^{j+1} -5$, 
then $c(n) = 6\cdot 2^{2i+j} +4 \cdot 2^j + {8\over 3} \cdot 2^{2i} (-1)^j
- {2\over 3}(-1)^j - 14$ for $i\geq 1$ and $j \geq 2$.
\end{itemize}
\label{six}
\end{lemma}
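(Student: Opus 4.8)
The plan is to deduce all seven items from the three recurrences \eqref{eq0}--\eqref{eq3}, the closed forms of the Proposition, and the $2$-automatic sequences $a_0,a_1,a_3$, using that each exceptional set is a simple regular language in base-$2$ representations, so that a claim of the form ``$a_i$ is constant (or parity-periodic) on this set'' reduces to a finite computation on the DFAO of Figure~\ref{aut0}, \ref{aut1}, or \ref{aut3} (equivalently, a \texttt{Walnut} check). Items (i) and (ii) are immediate: for $n=2^k-1$ with $k\ge2$, Eq.~\eqref{2km1} gives $c(n)=\frac43 2^k-\frac13(-1)^k-3$, while $\frac43 n-2=\frac43 2^k-\frac{10}{3}$, so the claim reduces to $-\frac13(-1)^k-3\ge-\frac{10}{3}$, which always holds ($n=1$ is checked directly), and (ii) follows the same way from Eq.~\eqref{2k1}, whose range $k\ge2$ is exactly $B$.

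For (iii), write $n=4i+1$ with $i=3\cdot2^k-1$, so that $(n+3)/4=i+1=3\cdot2^k$ and $(n+1)/2=2i+1=6\cdot2^k-1$; then Eq.~\eqref{eq1} reads $c(n)=2c((n+3)/4)+c((n+1)/2)+a_1(3\cdot2^k-1)$, and it remains to verify $a_1(3\cdot2^k-1)=0$. As $3\cdot2^k-1$ has base-$2$ representation $1\,0\,1^{k}$, this says the DFAO for $a_1$ outputs $0$ throughout the language $1\,0\,1^{\ast}$. For (iv) I would induct on $j$: the base case $j=0$ is $n=2^{2i+1}+1\in B$ with $k=2i+1$ odd, handled by Eq.~\eqref{2k1}; the step uses $c\bigl((2^{2i+1}+1)2^{j}\bigr)=2c\bigl((2^{2i+1}+1)2^{j-1}\bigr)+a_0\bigl((2^{2i+1}+1)2^{j-1}\bigr)$ from Eq.~\eqref{eq0}, so all that is needed is $a_0\equiv4$ on $J$. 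Since elements of $J$ have base-$2$ representation $1\,0^{2i}\,1\,0^{j}$, this is again a finite DFAO check (the automaton must track the parity of the blocks of zeros, but it is constant $4$ on $1(00)^{+}1\,0^{\ast}$); summing the geometric series then yields $c\bigl((2^{2i+1}+1)2^{j}\bigr)=2^{j}c(2^{2i+1}+1)+4(2^{j}-1)=\frac83 2^{2i+j}+\frac43 2^{j}-4$.

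Items (v), (vi), (vii) are the heart of the lemma. The cleanest route is to specialize the rank-$50$ linear representation of $c$ to the regular base-$2$ language of each family --- $1(00)^{+}1\,0^{\ast}11$ for $4J+3$ and $2J+3$, and $1\,0^{2i+1}\,1^{j-2}\,011$ for $2J-5$ --- so that, writing $n$ for the element with parameters $(i,j)$, the value $c(n)$ becomes a matrix product of the shape $v\,\gamma(1)\,\gamma(0)^{2i}\,\gamma(1)\,\gamma(0)^{j}\cdots w$; diagonalizing $\gamma(0)$ on the relevant invariant subspace, whose eigenvalues $2,1,-1$ generate precisely the terms $2^{2i+j}$, $2^{j}$, $2^{2i}(-1)^{j}$, $(-1)^{j}$, $1$ occurring in the statement, produces each claimed closed form, and one certifies it by forming the difference of the two linear representations, minimizing it, and checking that it is identically zero. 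Alternatively, and more in the spirit of (iii)--(iv), one can cascade the recurrences directly: for $n\in4J+3$ write $n=4m+3$ with $m\in J$ and apply Eq.~\eqref{eq3}, which expresses $c(n)$ through $c(2m)$ (item (iv), as $2m\in J$), $c(2m+3)$ (item (vi) when $j\ge1$; the case $j=0$, where $2m+3=2^{2i+2}+5$, unwinds through Eqs.~\eqref{eq1} and \eqref{eq0} to Eqs.~\eqref{2k3} and \eqref{2k1}), $c(2m+4)$ (which, after one or two applications of Eq.~\eqref{eq0} and a short auxiliary unwinding of values of the form $c(2^{a}+2^{b}+1)$ via Eqs.~\eqref{eq0}--\eqref{eq1}, again reduces to the Proposition), and the automatic term $a_3(m)$, parity-periodic on $J$; items (vi) and (vii) are treated analogously, the hypotheses $j\ge1$ in (vi) and $j\ge2$ in (vii) being exactly what is needed to avoid the degenerate branches.

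I expect the main obstacle to be precisely this bookkeeping in (v)--(vii): the families $4J+3$, $2J+3$, $2J-5$ refer to one another (and to (iv)), so the reductions must be arranged so as not to loop; several degenerate sub-cases must be peeled off --- in particular $(i,j)=(1,0)$ in (v), i.e.\ $n=39$, which is excluded because it is the unique parameter pair at which the exact value of $c(n)$ falls just below $\frac{4n+16}{3}$; and the $(-1)^{j}$ contributions, coming from the eigenvalue $-1$ of $\gamma(0)$ acting on a block whose length depends on $j$, must be tracked without sign errors. The remaining work --- checking the values of $a_0,a_1,a_3$ on the explicit regular sublanguages ($1\,0\,1^{\ast}$, $1\,0^{\ast}1\,0^{\ast}$, $1\,0^{\ast}11$, and so on) and, in (i), (ii), (v), comparing the resulting exact formula with $\frac43 n-2$, $\frac43 n-4$, and $\frac{4n+16}{3}$ --- is routine.
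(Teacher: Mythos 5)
Your proposal is correct and, in its primary form, is essentially the paper's own proof. Items (i)--(ii) are read off from Eqs.~\eqref{2km1} and \eqref{2k1} exactly as in the paper; for (iii) the paper likewise sets $i=(n-1)/4$ in Eq.~\eqref{eq1} and reads off from the DFAO of Figure~\ref{aut1} that $a_1(m)=0$ precisely for $m=3\cdot 2^k-1$, and your one-directional check on the language $1\,0\,1^{*}$ is all that item (iii) needs. Your main route for (v)--(vii) --- evaluate $c(n)=v\,\gamma(1)\gamma(0)^{2i}\gamma(1)\gamma(0)^{j}\cdots w$ over the binary forms $1\,0^{2i}1\,0^{j}11$ etc., use the spectral structure of $\gamma(0)$ (its minimal polynomial is $X^2(X-1)(X-2)(X+1)$, so it is not literally diagonalizable, but the shape of its powers for $j\ge 2$ is what matters) to see that $c(n)$ is a fixed linear combination of $2^{2i+j},2^{j},2^{2i},2^{2i}(-1)^{j},(-1)^{j},1$, and then pin down the coefficients --- is precisely the paper's ``variant of the linear representation trick''; your difference-of-representations certification is an acceptable substitute for the paper's solve-a-linear-system-from-small-$(i,j)$ step, and in either treatment the small parameter values not covered by the shape argument (notably $(i,j)=(1,0)$, i.e.\ $n=39$, in (v), where the bound $\ge(4n+16)/3$ fails) must be checked separately, as you anticipate. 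The one point of genuine divergence is (iv): the paper applies the same matrix-product argument to $1\,0^{2i}1\,0^{j}$, whereas you induct on $j$ via Eq.~\eqref{eq0} after verifying on the DFAO of Figure~\ref{aut0} that $a_0\equiv 4$ on $1(00)^{+}1\,0^{*}$; that check does succeed (e.g.\ $a_0(9)=c(18)-2c(9)=4$), your base case via Eq.~\eqref{2k1} with odd exponent and the geometric-series summation are correct, so this is a sound, slightly more elementary alternative at the cost of one extra automaton verification. Your fallback ``cascade'' through Eqs.~\eqref{eq0}--\eqref{eq3} for (v)--(vii) is not needed and, as you note, is where the bookkeeping risk would lie; the paper avoids it entirely.
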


\begin{proof}
Items (i) and (ii) follow immediately from
Eqs.~\eqref{2k1} and \eqref{2km1}.

For item (iii), take $i = (n-1)/4$ in Eq.~\eqref{eq1}.   Then
$c(n) = 2c((n+3)/4) + c((n+1)/2)$ iff $a_1 ((n-1)/4) = 0$.
But from the DFAO in Figure~\ref{aut1} we see $a_i(m) = 0$ iff
$m = 3\cdot2^k - 1$ for $k \geq 0$.  
Hence $a_1 ((n-1)/4) = 0$ iff $(n-1)/4 = 3\cdot2^k - 1$ for $k \geq 0$,
iff $n = 12 \cdot 2^k - 3$ for $k \geq 0$.

For item (iv),
we use a variant of the linear representation trick.  Let
$n = (2^{2i+1} + 1)2^j $.  The base-$2$
representation of $n$
is $1 0^{2i} 1 0^j$, so $c(n) = v \gamma(1 0^{2i} 1 0^j) w
= v \gamma(1) \gamma(0)^{2i} \gamma(1) \gamma(0)^j w$.
The minimal polynomial of $\gamma(0)$ is $X^2 (X-1)(X-2)(X+1)$,
so each entry of $\gamma(0)^j$ for $j \geq 2$ is a linear combination of
$2^j, (-1)^j, $ and $1$.   The same is then true for
$\gamma(1) \gamma(0)^j w$.   Similarly, each entry of
$\gamma(0)^{2i}$ for $i \geq 2$ is a linear combination of
$2^{2i}$ and $1$, and the same is true for
$v \gamma(1) \gamma(0)^{2i}$.   Hence the entries of the product
$v \gamma(1) \gamma(0)^{2i} \gamma(1) \gamma(0)^j w$
are linear combinations of $2^{2i+j}$, $2^j$, $2^{2i}$, $1$,
$2^{2i} (-1)^j$, and $(-1)^j$.   We can deduce the particular
constants by substituting small values of $i$ and $j$ and
solving the resulting linear system.  The result now follows.

Parts (v), (vi), and (vii) follow from the same technique.
\end{proof}

We are now ready to prove Theorem~\ref{five}.

\begin{proof}
Again, the statement of the theorem does not seem strong enough
to make the induction go through.  

We will prove the following three claims below by 
simultaneous induction on $n$.
\begin{itemize}
\item[(i)] For all $n$ we have $c(n) \geq {4\over 3}n - 4$.
\item[(ii)] If $n$ is even and $n \not\in J$ 
then $c(n) \geq {4\over 3}n - 2$.
\item[(iii)] If $n$ is odd, $n \geq 47$, and $n \not\in A \uni B$,
then $c(n) \geq (4n+16)/3$.
\end{itemize}

The base case is $n < 191$, which we can check by a short computation.
Now assume $n \geq 191$.

\bigskip
\noindent{\it Case 1:}   $n \equiv \modd{0} {2}$, $n = 2i$.

\smallskip

Case 1a:  Suppose $n \in J$.   Then from Lemma~\ref{six} (iv) we have
$c(n) = {4\over 3}n - 4$.

\smallskip

Case 1b:  Suppose $n\not\in J$.   Then 
\begin{align*}
c(n) &= c(2i) = c(i) + a_0(i) \\
&= 2c(n/2) + a_0 (n/2) \\
&\geq 2 c(n/2) + 2 \\
&\geq 2 \cdot ({4\over 3} (n/2) - 2) + 2  \quad\quad \text{(by induction,
since $n/2 \geq 47$)} \\
&= {4 \over 3} n - 2.
\end{align*}

\noindent{\it Case 2:}   $n \equiv \modd{1} {4}$, $n = 4i+1$.
If $n \in A \uni B$ then the inequality $c(n) \geq {4\over 3}n - 4$
follows from Eqs.~\eqref{2k1} and \eqref{2km1}.  So assume
$n \not\in A \uni B$.

\smallskip

Case 2a:  Suppose $n \not\in D$,
${{n+3} \over 4} \not\in J \uni A \uni B$,
${{n+1}\over 2} \not\in A \uni B$.

\smallskip

Then 
\begin{align*}
c(n) &= c(4i+1) = 2c(i+1) + c(2i+1) + a_1(i)  \\
&= 2c((n+3)/4) + c((n+1)/2) + a_1 ((n-1)/4) \\
&\geq 2c((n+3)/4) + c((n+1)/2) + 2 \quad\quad \text{(because $n \not\in D$)} \\
&\geq 2 \cdot ({4\over 3} \cdot ((n+3)/4) - 2) + (4 {{n+1}\over 2} + 16)/3 + 2 \\
& \quad\quad \text{(because $(n+3)/4 \not\in J \uni A \uni B$ and
$(n+1)/2 \not\in A \uni B$} \\
& \quad\quad \text{and $(n+1)/2 \geq (n+3)/4 \geq 47$ by induction)} \\
& \geq {4\over 3} n + 6 \geq (4n+16)/3.
\end{align*}

Case 2b:  If $n \in D$ then Eq.~\eqref{12km3} implies that
$c(n) \geq (4n/3) - 4$.   If further $n \geq 47$ then it implies
that $c(n) \geq (4n+16)/3$.

\smallskip

Case 2c:  Suppose ${{n+3}\over 4} \in J$.   Then from Lemma~\ref{six} (v) we
have a closed form for $c(n)$.  Using
a routine calculation and the fact that $n \not= 39$, we get
$c(n) \geq (4n+16)/3$.

\smallskip

Case 2d:  Suppose ${{n+3}\over 4} \in A$.  
Then $n = 2^k - 7$ for $k \geq 3$, and then by Eq.~\eqref{2km7}
we have $c(n) \geq (4n+16)/3$ for $k \geq 5$.

\smallskip

Case 2e:  Suppose ${{n+3}\over 4} \in B$.    Then $n \in B$, a
contradiction.

\smallskip

Case 2f:  Suppose ${{n+1} \over 2} \in A$ .
Then $n = 2^k - 3$ for $k \geq 2$, and by Eq.~\eqref{2km3}
we have $c(n) \geq (4n+16)/3$ for $k \geq 5$.

\smallskip

Case 2g:  Suppose ${{n+1} \over 2} \in B$.  Then  $n \in B$,
a contradiction.

\bigskip

\noindent{\it Case 3:}  $n \equiv \modd{3} {4}$, $n = 4i+3$.
If $n \in A \uni B$ then $c(n) \geq {4\over 3}n - 4$
follows from Eqs.~\eqref{2k1} and \eqref{2km1}.  So assume $n \not\in A
\uni B$.

\smallskip

Case 3a:  Suppose $(n-3)/2 \not\in J \uni A \uni B$ and
$(n+3)/2 \not\in J \uni A \uni B$ and
$(n+5)/2 \not\in J \uni A \uni B$.   Then
\begin{align*} 
c(n) &= c(4i+3) = {1 \over 2} c(2i) + c(2i+3) + {1\over 2} c(2i+4) + a_3(i) \\
&= {1\over 2} c((n-3)/2) + c((n+3)/2) + c((n+5)/2) + a_3((n-3)/4) \\
&\geq {1\over 2} ({4\over 3} (n-3)/2 - 2) 
+ (4 {{n+3} \over 2} + 16)/3 + {1\over 2} ({4\over 3} (n+5)/2 - 2) - 1 \\
& \quad\quad \text{(by conditions on $(n-3)/2, (n+3)/2, (n+5)/2$} \\
& \quad\quad \text{and $(n+5)/2\geq (n+3)/2 \geq (n-3)/2 \geq 47)$} \\
&= {4\over 3}n + 6 \geq (4n+16)/3.
\end{align*}

Case 3b:  Suppose $(n-3)/2 \in J$.   Then $n \in 2J+3$, so
$n = (2^{2i+1} + 1)2^{j+1} + 3$.  Since $n \equiv \modd{3} {4}$ we
must have $j \geq 1$.   Then by a routine
calculation using Lemma~\ref{six} (vi),  we have $c(n) \geq (4n+16)/3$
since $n > 39$.

\smallskip

Case 3c:  Suppose $(n-3)/2 \in A \uni B$.
But $(n-3)/2 = 2n$ is even, a contradiction.

\smallskip

Case 3d:  Suppose $(n+3)/2 \in J$.  Then $n \in 2J - 3$.
But $n \equiv \modd{3} {4}$, so it is easy to see that
this forces $n = 2^{2k} - 1 \in B$ for $k \geq 2$, a contradiction.

\smallskip

Case 3e:  Suppose $(n+3)/2 \in A$.  Then $n = 2^k - 5$ and
by Eq.~\eqref{2km5} we have $c(n) \geq (4n+16)/3$ for $k \geq 6$.

\smallskip

Case 3f:  Suppose $(n+3)/2 \in B$.  Then $n \in A$, a contradiction.

\smallskip

Case 3g:  Suppose $(n+5)/2 \in J$.  Then $n \in 2J-5$.  
Then from Lemma~\ref{six} (vii) it follows by a routine computation that
$c(n) \geq (4n+16)/3$.

\smallskip

Case 3h:  Suppose $(n+5)/2 \in A \uni B$.   But
$(n+5)/2 = 2n+4$ is even, a contradiction.

\smallskip

This completes the proof by induction of (i), (ii), and (iii).
\end{proof}

\begin{corollary}
$\liminf_{n \rightarrow \infty} c(n)/n = 4/3$.
\end{corollary}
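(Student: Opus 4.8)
The plan is to combine the lower bound already established in Theorem~\ref{five} with an explicit infinite family of indices along which $c(n)/n$ approaches $4/3$. The inequality $\liminf_{n\to\infty} c(n)/n \geq 4/3$ is immediate: by Theorem~\ref{five} we have $c(n) \geq \frac{4}{3}n - 4$ for all $n \geq 0$, so $c(n)/n \geq \frac{4}{3} - \frac{4}{n}$ for $n \geq 1$, and letting $n \to \infty$ gives the claim.

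For the reverse inequality $\liminf_{n\to\infty} c(n)/n \leq 4/3$, I would use part (iv) of Lemma~\ref{six}: for every $n \in J$, say $n = (2^{2i+1}+1)2^j$ with $i \geq 1$, $j \geq 0$, we have the exact identity $c(n) = \frac{4}{3}n - 4$. Fixing $i = 1$ and letting $j$ range over all nonnegative integers gives the infinite sequence $n_j = 9 \cdot 2^j$, which tends to infinity, and along it $c(n_j)/n_j = \frac{4}{3} - \frac{4}{n_j} \to \frac{4}{3}$. Therefore the $\liminf$ is at most $4/3$. (Alternatively one could invoke Eq.~\eqref{2km1}, which gives $c(2^k-1)/(2^k-1) \to 4/3$ as $k \to \infty$, yielding the same conclusion.)

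Combining the two inequalities yields $\liminf_{n\to\infty} c(n)/n = 4/3$, as desired. There is no real obstacle here: all of the substantive work has been done in Theorem~\ref{five} and in Lemma~\ref{six}(iv), and what remains is only to assemble these pieces and check the elementary limit.
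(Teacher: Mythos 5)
Your argument is correct and matches the paper's intended (implicit) reasoning: the paper states this as an immediate corollary of Theorem~\ref{five} together with the exact values on an infinite family (Lemma~\ref{six}(iv), or equivalently Eq.~\eqref{2km1}), which is exactly how you assemble it. Nothing further is needed.
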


We also have enough to prove
\begin{theorem}
$c(n) = {4 \over 3} n - 4$ iff $n \in J$.
\end{theorem}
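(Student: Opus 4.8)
The plan is to establish both directions using the machinery already developed. The ``if'' direction is immediate: Lemma~\ref{six}(iv) states precisely that $c(n) = \frac{4}{3}n - 4$ for every $n \in J$. So the substance is in the ``only if'' direction, which asserts that $n \in J$ is \emph{necessary} for equality. The natural strategy is to revisit each case of the proof of Theorem~\ref{five} and check that equality $c(n) = \frac{4}{3}n - 4$ can only occur where we have been forced into the $J$-branch. Concretely, in the main induction we proved the \emph{strict} improvements $c(n) \geq \frac{4}{3}n - 2$ (for $n$ even, $n \notin J$) and $c(n) \geq (4n+16)/3$ (for $n$ odd, $n \geq 47$, $n \notin A \cup B$); each of these bounds is strictly larger than $\frac{4}{3}n - 4$. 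So equality at $n$ forces $n$ into one of the ``special'' families: either $n \in J$, or $n$ is odd and small ($n < 47$), or $n \in A \cup B$, or $n \in D$.

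First I would dispose of the small cases and the arithmetic-progression families $A$, $B$, $D$ by direct appeal to the closed forms. For $n \in A$ we have $c(n) \geq \frac{4}{3}n - 2 > \frac{4}{3}n - 4$ by Lemma~\ref{six}(i), so no element of $A$ achieves equality; similarly Eq.~\eqref{2k1} gives that $c(n) - (\frac{4}{3}n - 4) = \frac{2}{3}(-1)^k + 2$ is strictly positive for $n = 2^k + 1 \in B$ with $k \geq 2$, and Eq.~\eqref{12km3} handles $D$ the same way (the value exceeds $\frac{4}{3}n - 4$ by a positive amount). For the finitely many odd $n < 47$ and the even $n < 191$ base cases, one checks directly against Table~\ref{tab1} extended by computation which $n$ satisfy $c(n) = \frac{4}{3}n - 4$ and verifies that exactly the elements of $J$ below the bound (namely $9, 18, 33, 36, 66, 72, 129, 132, 144$) appear. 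This reduces the problem to showing: if $n \geq 191$, $n \notin A \cup B \cup D$, $n$ odd or $n$ even, and $c(n) = \frac{4}{3}n - 4$, then $n \in J$.

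The remaining argument I would run by induction, paralleling the structure of Theorem~\ref{five}'s proof. If $n$ is even, write $n = 2i$; if $n \notin J$ then Case 1b of that proof gives $c(n) \geq \frac{4}{3}n - 2$, contradicting equality, so $n \in J$ as desired. If $n$ is odd with $n \equiv 1 \pmod 4$ and $n \notin A \cup B \cup D$: in Case 2a we derived $c(n) \geq \frac{4}{3}n + 6$, far from equality; the remaining sub-cases 2c, 2d, 2f each gave $c(n) \geq (4n+16)/3 > \frac{4}{3}n - 4$; so no such $n$ achieves equality, and in particular none lies outside $J$ (indeed $J$ contains no odd numbers other than those in the base case, which is consistent). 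The case $n \equiv 3 \pmod 4$ is handled identically: Cases 3a, 3b, 3e, 3g all deliver a bound strictly exceeding $\frac{4}{3}n - 4$. Hence for $n \geq 191$ equality forces $n$ even and $n \in J$.

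\textbf{Main obstacle.} The delicate point is \emph{bookkeeping}: one must confirm that in \emph{every} branch of the Theorem~\ref{five} proof other than the $J$-branch (Case 1a), the inequality obtained is strictly greater than $\frac{4}{3}n - 4$ --- not merely $\geq$. This requires re-reading each displayed chain of inequalities and noting the actual slack (e.g.\ $\frac{4}{3}n - 2$, $\frac{4}{3}n + 6$, $(4n+16)/3$, all of which beat $\frac{4}{3}n - 4$), and separately checking the closed-form cases $A$, $B$, $D$, $4J+3$, $2J+3$, $2J-5$, $2^k-5$, $2^k-7$ for strict inequality --- which holds because each exceeds $\frac{4}{3}n-4$ by a constant that is at least $2$ up to the $(-1)^k$ oscillation, hence positive. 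The only case where the bound is \emph{tight} is Lemma~\ref{six}(iv), i.e.\ $n \in J$. Care is also needed at the boundary between the base case and the inductive range to ensure no element of $J$ is missed or double-counted; but since $J$ is explicitly enumerated and the claimed equivalence is clean, this is routine verification rather than a genuine difficulty.
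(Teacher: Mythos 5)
Your overall architecture is the same as the paper's: quote Lemma~\ref{six}(iv) for the ``if'' direction, and for ``only if'' invoke the stronger claims established inside the proof of Theorem~\ref{five} (namely $c(n) \geq \tfrac43 n - 2$ for even $n \notin J$, and $c(n) \geq (4n+16)/3$ for odd $n \geq 47$ with $n \notin A \cup B$), leaving only $A$, $B$ and small $n$ to be settled by the closed forms and a finite computation. But your treatment of $B$ contains a genuine error. From Eq.~\eqref{2k1}, for $n = 2^k+1$ one has
\begin{equation*}
c(n) - \Bigl(\tfrac43 n - 4\Bigr) \;=\; \Bigl(\tfrac43\cdot 2^k + \tfrac23(-1)^k - 2\Bigr) - \Bigl(\tfrac43\cdot 2^k - \tfrac83\Bigr) \;=\; \tfrac23\bigl((-1)^k + 1\bigr),
\end{equation*}
not $\tfrac23(-1)^k + 2$ as you wrote. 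This quantity is \emph{not} strictly positive: it vanishes exactly when $k$ is odd, i.e., when $n = 2^{2i+1}+1$, and those $n$ are precisely the odd elements of $J$ (the $j=0$ case). So your conclusion that ``no element of $B$ achieves equality'' is false, and it directly contradicts the ``if'' direction you accepted from Lemma~\ref{six}(iv), since $B \cap J = \{9, 33, 129, 513, \ldots\}$ is infinite. The related assertion that ``$J$ contains no odd numbers other than those in the base case'' is also false for the same reason, so your final reduction ``for $n \geq 191$ equality forces $n$ even and $n \in J$'' is wrong: equality also occurs at arbitrarily large odd $n = 2^{2i+1}+1$. The paper's proof handles exactly this point by observing that, for $n \in B$, Eq.~\eqref{2k1} gives $c(n) = \tfrac43 n - 4$ if and only if $n = 2^{2k+1}+1 \in J$, which is what your argument must say instead of claiming strict inequality on all of $B$.

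A smaller blemish of the same kind: your parenthetical claim that Eq.~\eqref{12km3} shows every $n \in D$ exceeds $\tfrac43 n - 4$ ``by a positive amount'' fails at $k=0$ (i.e., $n=9$, where $c(9) = 8 = \tfrac43\cdot 9 - 4$); this is harmless only because $9 < 47$ lands in your finite check and $9 \in J$, but it is another instance of overlooking that the bound is attained exactly on $J$, including its odd members. With the $B$ case corrected as above (and the $D$ remark either dropped or restricted to $k \geq 1$), your argument matches the paper's.
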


\begin{proof}
$\Longleftarrow$: follows from Lemma~\ref{six} (v).

$\Longrightarrow$:  if $n$ is even and $n \not\in J$ then
$c(n) \geq {4\over 3}n - 2 > {4\over 3}n - 4$ by above.   

If $n\geq 47$ is odd
and $n \not\in A \uni B$, then $c(n) \geq (4n+16)/3 > {4\over 3} n - 4$
by above.   It remains to check $n < 47$ and $n \in A \uni B$.

For $n \in A$, we know from Lemma~\ref{six} (i) that
$c(n) \geq {4 \over 3}n - 2 > {4\over 3} n - 4$.
For $n \in B$, it follows from Eq.~\eqref{2k1} that
$c(n) = {4 \over 3} n - 4$ iff $n = 2^{2k+1} + 1 \in J$.
Finally, $n < 47$ can be checked with a computation.
\end{proof}

\section{Final remarks}

For ordinary subword complexity $\rho_{\bf x}(n)$, which counts the number of
distinct length-$n$ factors appearing in $\bf x$,
it is known that if $\bf x$ is
an automatic sequence, then there is an automaton that
takes as input the representations of $n$ and $y$ in parallel,
and accepts iff $y = \rho_{\bf x}(n)$.   In other words,
$\rho_{\bf x}(n)$ is {\it synchronized\/} for automatic sequences;
see \cite{Goc&Schaeffer&Shallit:2013} for more details.
This means that checking 
whether $\rho_{\bf x}(n) \leq An+B$ or $\rho_{\bf x}(n) \geq
An+B$ for all $n$ is, in general,
decidable for automatic sequences,
since we can express these assertions as a first-order
logical formula.

However, for cyclic complexity the function $c_{\bf x}(n)$ is not, in general,
synchronized.  We can see this as follows:   let ${\bf p} = 11010001\cdots$
be the characteristic sequence of the powers of $2$.  Then it is not
hard to see that $c_{\bf p}(n) = O(\log n)$ and 
$c_{\bf p}(2^n) = n+2$ for $n \geq 0$.
However, by a theorem about synchronized sequences \cite{Shallit:2021h},
this kind of growth rate is impossible.

This fundamental difference may help explain why it is so much harder to prove
inequalities for cyclic complexity.

\end{document}